\documentclass[11pt,a4paper]{article}
\usepackage[T1]{fontenc}
\usepackage{lmodern}
\usepackage[margin=25mm]{geometry}
\usepackage{amsmath,amssymb,amsthm,mathtools}
\usepackage{microtype,xcolor}
\usepackage[colorlinks=true,allcolors=blue!45!black]{hyperref}
\newtheorem{theorem}{Theorem}
\newtheorem{proposition}{Proposition}
\newtheorem{lemma}{Lemma}
\newcommand{\R}{\mathbb R}
\newcommand{\ind}{\mathbf 1}
\newcommand{\intr}{\operatorname{int}}
\newcommand{\conv}{\operatorname{conv}}
\newcommand{\Prob}{\mathcal P}
\newcommand{\If}{I^*}
\hypersetup{
  pdftitle={Fractional illumination and the optimal exponential rate in Hadwiger's covering conjecture},
  pdfauthor={Yegor Gorodzha}
}

\title{Fractional illumination and the optimal exponential rate in Hadwiger's covering conjecture\thanks{The main argument was found in conversation with OpenAI's GPT-6 Pro. See Appendix~\ref{app:ai} for an account of the discovery and AI use.}
\thanks{The author is grateful to Gil Kur and Steven Hoehner for their help in presenting this work.}}
\author{Yegor Gorodzha\thanks{ETH Zurich, Switzerland. The author was supported during a scientific internship at the Institute of Science and Technology Austria (ISTA) by the European Research Council (ERC), grant INF\_2 (grant agreement no.~101161364).}}
\date{}

\makeatletter
\renewcommand{\@maketitle}{%
  \newpage\null
  \vskip .5em
  \begin{center}%
    \let\footnote\thanks
    {\LARGE\@title\par}%
    \vskip .75em
    {\large\lineskip .5em
      \begin{tabular}[t]{c}\@author\end{tabular}\par}%
  \end{center}%
  \par\vskip .25em}
\makeatother

\begin{document}
\maketitle
\begin{abstract}
We show that the fractional illumination number introduced by
Nasz\'odi~\cite{Naszodi} is at most $2^d$ for every convex body in $\R^d$,
with equality exactly for parallelotopes. We also prove that
every such body can be covered by at most
$2^d(d\log d+d\log\log d+O(d))$ smaller positive homothetic copies, establishing
the optimal exponential rate in Hadwiger's covering conjecture. The proofs
use a covering measure obtained by minimizing an overlap energy and a
greedy covering argument on a finite net.
\end{abstract}

\section{Introduction}

\emph{Hadwiger's covering conjecture}, posed in 1957~\cite{Hadwiger},
asks whether every convex body $K\subset\R^d$ can be covered by $2^d$
smaller positive homothets, that is, sets $z+\lambda K$ with $0<\lambda<1$.
Levi~\cite{Levi} had settled the planar case in 1955. Gohberg and
Markus~\cite{GohbergMarkus} independently posed the homothetic formulation.
Write $H(K)$ for the least number of such homothetic copies covering $K$ and
$H(d)=\sup_K H(K)$, where the supremum is over convex bodies in $\R^d$.

Rogers' covering theorem~\cite{Rogers} and the Rogers--Shephard
inequality~\cite{RogersShephard} give the classical bound
$H(d)=O(4^d\sqrt d\log d)$, see Rogers and Zong~\cite{RogersZong}.
Huang, Slomka, Tkocz, and Vritsiou~\cite[Theorem~1.1]{HSTV} improved this to
$4^d e^{-c\sqrt d}$, and Campos, van Hintum, Morris, and
Tiba~\cite{CHMT} obtained $4^d e^{-cd/\log d}$.
Their estimate in terms of the isotropic constant~\cite{CHMT},
together with the solution of the slicing problem by Klartag and
Lehec~\cite[Theorem~1.2]{KL}, yields $H(d)\leq(4-\varepsilon)^d$.
These bounds hold for all sufficiently large $d$, with absolute
constants $c,\varepsilon>0$.

In the equivalent illumination formulation, a direction $u\in S^{d-1}$
\emph{illuminates} $x\in\partial K$ if $x+tu\in\intr K$ for some $t>0$.
The least number $I(K)$ of directions illuminating every boundary point
equals $H(K)$, by Boltyanski~\cite{Boltyanski}.
Nasz\'odi~\cite[Definition~1]{Naszodi} introduced the \emph{fractional
illumination number} $\If(K)$ in 2009: it is the infimum of $\sum_jw_j$
over finite families $u_1,\dots,u_m\in S^{d-1}$ and $w_1,\dots,w_m\geq0$ satisfying
\[
 \sum_{j:\,u_j\text{ illuminates }x}w_j\geq1
 \qquad(x\in\partial K).
\]
Nasz\'odi proved $\If(K)\leq\binom{2d}{d}$ in general and $\If(K)\leq2^d$ for
centrally symmetric bodies~\cite[Theorem~2 and Corollary~3]{Naszodi}.
Artstein-Avidan and Slomka~\cite{ArtsteinSlomka} characterized
equality in the symmetric case. Our first theorem below proves Nasz\'odi's
conjecture~\cite[Conjecture~7]{Naszodi} for all convex bodies. The second result gives the optimal exponential rate for $H(d)$.

\begin{theorem}\label{thm:fractional}
Every convex body $K\subset\R^d$ satisfies $\If(K)\leq2^d$.
Equality holds if and only if $K$ is a parallelotope, that is, an invertible
affine image of $[-1,1]^d$.
\end{theorem}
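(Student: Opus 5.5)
The plan is to pass from finite families to measures on directions. For $x\in\partial K$ let $C_x=\{u:\ x+tu\in\intr K\text{ for some }t>0\}$. This is the open convex cone generated by $\intr K-x$, and $u$ illuminates $x$ exactly when $u\in C_x$. First I would show that $\If(K)$ equals the infimum of $1/\min_{x\in\partial K}\nu(C_x)$ over Borel probability measures $\nu$ on $S^{d-1}$.
\begin{itemize}
\item One direction is trivial: normalize the weights of a finite family to get such a $\nu$.
\item For the other direction, discretize $\nu$. The sets $C_x$ are open, and $x\mapsto C_x$ is lower semicontinuous in the appropriate sense. So a compactness argument over $\partial K$, together with approximating $\nu$ by atomic measures, loses only an arbitrarily small factor.
\end{itemize}
The upper bound $\If(K)\le 2^d$ therefore reduces to constructing one measure $\nu$ with $\nu(C_x)\ge 2^{-d}$ for every $x\in\partial K$.

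To build $\nu$ I would follow the hint in the abstract and minimize an overlap energy. The natural candidate is a probability measure $\mu$ on $K$ minimizing
\[
\mathcal E(\mu)=\iint \varphi(y-z)\,d\mu(y)\,d\mu(z),
\]
where $\varphi$ penalizes pairs whose difference is small relative to $K-K$. For example, take $\varphi=\ind_{\frac12(K-K)}$ or a smoothing of it; this is "overlap" in the sense of $\mu*\check\mu$ of $\tfrac12(K-K)$. The steps are:
\begin{enumerate}
\item Let $\nu$ be the law of the direction of $y-z$ for $y,z$ independent with law $\mu$.
\item Derive the Euler--Lagrange (first-variation) condition: the potential $P(w)=\int\varphi(w-z)\,d\mu(z)$ is at least $\mathcal E(\mu)$ everywhere on $K$ and equal to it $\mu$-a.e. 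This is the usual equilibrium-measure dichotomy.
\item Near a boundary point $x$, use convexity of $K$: for a homothety centred at $x$ with ratio $\tfrac12$, the image $x+\tfrac12(K-x)$ lies in $K$ and sits inside $x+\overline{C_x}$.
\item Combine the equilibrium condition at such points with the identity $\mathcal E(\mu)\ge\operatorname{vol}(K)/\operatorname{vol}(K-K)\cdot(\text{const})$, a Rogers--Shephard-type step. The goal is $\Pr[\,y-z\in C_x\,]\ge 2^{-d}$.
\end{enumerate}
The factor $2^{-d}$ should appear as $\operatorname{vol}(\tfrac12K)/\operatorname{vol}(K)$, coming from the homothety of ratio $\tfrac12$ centred at $x$.

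I expect the main obstacle to be exactly this last inequality: turning the equilibrium property of the minimizer into a lower bound on the mass of the cone $C_x$ that is uniform in $x$. My first attempt would compare the energy of $\mu$ with the energy of its pushforward under the contraction $w\mapsto x+\tfrac12(w-x)$. Mass that this map moves "into the cone" should be controlled by the loss of energy. Minimality then forces at least a $2^{-d}$ fraction of difference directions into $C_x$.

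For the equality case, I would trace equality through that argument. Equality should force the contraction maps at the vertices to tile $K$ exactly by $2^d$ half-size homothets, with $\nu$ concentrated on finitely many directions. The tiling can only happen if $K$ is a parallelotope, by the standard characterization: a convex body tiled by $2^d$ of its homothets of ratio $\tfrac12$ is a parallelotope. Conversely, for $[-1,1]^d$ each vertex $x=(\pm1,\dots,\pm1)$ is illuminated only by directions in the open orthant $-\operatorname{sign}(x)\cdot(0,\infty)^d$. These $2^d$ open orthants are pairwise disjoint, so any admissible weights must put total weight at least $1$ on each orthant, giving $\If\ge 2^d$. The bound is invariant under invertible affine maps, so it holds for every parallelotope.
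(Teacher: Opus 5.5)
\paragraph{Where the proposal breaks.} The gap is your step~4. It carries the whole upper bound, and you flag it yourself as the main obstacle. Nothing in steps~1--3 links the equilibrium inequality $\int\varphi(w-z)\,d\mu(z)\geq\mathcal E(\mu)$ on $K$ to the mass that the law of the direction of $y-z$ puts on the cones $C_x$. That inequality only says how much $\mu$-mass lies in $w+\tfrac12(K-K)$.

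With $\varphi=\ind_{\frac12(K-K)}$ the desired inequality is actually false:
\begin{itemize}
\item For $K=[-1,1]^d$ one has $\tfrac12(K-K)=K$. The uniform measure on $\{-1,1\}^d$ has energy $2^{-d}$.
\item Split the cube into $2^d$ half-open cells of $\ell_\infty$-diameter $1$ and apply Cauchy--Schwarz: every $\mu$ has energy at least $2^{-d}$. So the vertex measure is a minimizer and satisfies every equilibrium condition.
\item For a vertex $x=\varepsilon$, $C_x$ is the open orthant $-\varepsilon(0,\infty)^d$. The difference $y-z$ lies there only if $y=-\varepsilon$ and $z=\varepsilon$, so $\Pr[y-z\in C_x]=4^{-d}$, not $2^{-d}$. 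Also $y=z$ with probability $2^{-d}$, where no direction is defined.
\end{itemize}
Hence no argument using only the minimality of $\mu$ can give step~4, and the contraction heuristic does not supply the missing mechanism.

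Your planned normalization through the difference body also cannot work. Any estimate $\If(K)\leq C_d\,|K-K|/|K|$ needs $C_d\geq1$, as the cube shows. It then gives at best $\binom{2d}{d}$ for a simplex, which is Nasz\'odi's old bound.

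\paragraph{The missing idea.} Take the kernel to be the covariogram $g_T=\ind_T*\ind_{-T}$ of $T=\lambda K$, with $0\in\intr K$. Pair $b\sim\mu$ with an \emph{independent uniform point} $t$ of $T$, not with a second copy of $\mu$.
\begin{itemize}
\item For every $x\in K$, $\Pr[x+t-b\in T]=|T|^{-1}\int g_T(x-b)\,d\mu(b)\geq e(K,T)/|T|$ by the first variation.
\item For $x\in\partial K$, the event $x+(t-b)\in T\subset\intr K$ implies $t-b\in C_x$.
\item The function $\mu*\ind_T$ has integral $|T|$ and is supported in $K+T=(1+\lambda)K$. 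Cauchy--Schwarz gives $e(K,T)\geq|T|^2/|(1+\lambda)K|$.
\end{itemize}
So the law of the direction of $t-b$ gives every $C_x$ mass at least $(\lambda/(1+\lambda))^d$, which tends to $2^{-d}$.

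\paragraph{The equality case.} ``Tracing equality'' also needs a strict estimate that survives the limit $\lambda\uparrow1$. The paper proves $\If(K)\leq|K|/e(K,K)$. It then shows that $e(K,K)=2^{-d}|K|$ forces $\mu*\ind_K=2^{-d}\ind_{2K}$ almost everywhere. Hence $\mu$ is uniform on $2^d$ exposed points $v$ whose translates $K+v$ tile $2K$; this is the tiling you anticipated. These points form an antipodal set, and Danzer--Gr\"unbaum gives a parallelotope. Your lower bound $\If\geq2^d$ for the cube is correct and is the paper's argument.
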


\begin{theorem}\label{thm:hadwiger}
As $d\to\infty$, the covering number $H(d)$ satisfies
\[
 H(d)\leq2^d\bigl(d\log d+d\log\log d+O(d)\bigr),
\]
with an absolute implicit constant. In particular,
$H(d)=2^{d+O(\log d)}$.
\end{theorem}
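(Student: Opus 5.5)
The plan is to deduce Theorem~\ref{thm:hadwiger} from Theorem~\ref{thm:fractional}. The route is the standard passage from a fractional cover to an integral one: a greedy (Lov\'asz--Stein) argument. The only extra input is a finite net of boundary points on which the greedy step is run.

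\emph{Step 1: a robust fractional illumination.} By Theorem~\ref{thm:fractional} there are directions $u_j$ and weights $w_j\ge0$ with $\sum_j w_j\le 2^d$ such that every $x\in\partial K$ receives total weight at least $1$ from the directions illuminating it. I would strengthen this slightly: replace ``illuminates'' by ``illuminates with margin''. This means $x+tu\in K_\delta$ for an inner parallel body, or equivalently, in the homothety language, $x\in z_j+\lambda K$ with $\lambda\le 1-\eta$ fixed. The price is replacing $2^d$ by $(1+o(1))2^d$. The justification is that illumination is an open condition and $\partial K$ is compact. To get the right constant, however, I would take the fractional measure from the proof of Theorem~\ref{thm:fractional}, the energy-minimizing covering measure. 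That proof should give, for each $\varepsilon>0$, a measure of total mass $\le 2^d$ whose supports illuminate every point with a quantitative margin depending on $\varepsilon$. Choosing $\lambda=1-1/d$ or so should cost at most a factor $(1+O(1/d))^d=O(1)$, which is absorbed into the $O(d)$ term.

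\emph{Step 2: finite net.} After an affine normalization of $K$ (say John position), I would choose a finite set $N\subset\partial K$ with the following property: every boundary point lies within distance $\rho$ of $N$, measured in a suitable metric, and any homothet $z+(1-\eta)K$ covering a net point $y$ also covers the $\rho$-neighborhood of $y$ when enlarged to $z+(1-\eta/2)K$. A volumetric argument gives $|N|\le (C d/\eta)^{d}$ or similar. Hence $\log|N|=d\log d+d\log\log d+O(d)$ once $\eta\approx 1/(d\log d)$ is chosen to balance the margin loss $(1-\eta)^{-d}$ against the net size. This balancing is what produces the $d\log\log d$ term.

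\emph{Step 3: greedy covering.} Apply the greedy set-cover bound to the hypergraph whose vertices are the points of $N$ and whose edges are the homothets from Step~1. Each vertex has fractional degree at least $1$ and the total weight is $\tau^*\le(1+o(1))2^d$. The greedy algorithm therefore yields an integral cover of $N$ of size at most $\tau^*(1+\log\Delta)$, or alternatively $\tau^*(1+\log|N|)$ via random sampling plus patching. The resulting count is $2^d(d\log d+d\log\log d+O(d))$. Enlarging each chosen homothet to scale $1-\eta/2<1$, as in Step~2, covers all of $\partial K$. By Boltyanski's equivalence this gives $H(K)$ within the bound, and taking the supremum over $K$ gives $H(d)=2^{d+O(\log d)}$.

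The main obstacle is Step~1 combined with Step~2: making the fractional illumination quantitatively robust. A mere compactness argument gives no rate. One needs the fractional weight at least $1$ to survive shrinking the homothets to ratio $1-\eta$ with only a $(1+O(\eta d))$ loss, and the net size must be controlled uniformly in $K$. I would first try to read this robustness directly off the overlap-energy measure. Concretely, I would express coverage by $\int \ind[x\in z+\lambda K]\,d\mu(z)$ and show that it is continuous in $\lambda$, with relative loss at most $O(d(1-\lambda))$, using homogeneity of volume.
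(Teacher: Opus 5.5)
\textbf{Gap: the net estimate in Step~2.} Your architecture matches the paper's: a covering measure for slightly shrunken homothets, a finite net, greedy selection, then enlargement. The problem is that Step~2 does not give the stated bound.

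With $|N|\le(Cd/\eta)^d$ and $\eta\approx1/(d\log d)$ you get
$\log|N|=d\log(Cd^2\log d)=2d\log d+d\log\log d+O(d)$,
not $d\log d+d\log\log d+O(d)$. The mass factor is about $2^de^{d\eta/2}$, so with a net of this size no choice of $\eta$ brings the coefficient of $2^d\,d\log d$ below $2$. As written, the proposal proves only $H(d)\le2^d(2d\log d+d\log\log d+O(d))$. That is enough for $H(d)=2^{d+O(\log d)}$, but not for the first assertion.

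A better Euclidean net would not fix this. In John position the inradius is $1$, so enlarging $(1-\eta)K$ to $(1-\eta/2)K$ only absorbs Euclidean radius $\rho\le\eta/2$. For a simplex, $|K|/|B_2^d|$ is $(c\sqrt d)^d$ up to lower-order factors. So even an optimal Euclidean net has $\log|N|\approx\tfrac32\,d\log d$ at this $\eta$; a surface-area count gives the same for nets of $\partial K$.

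\textbf{The missing idea} is a net of size $(C/\delta)^d$ with $C$ independent of $d$. The paper gets it from a centrally symmetric convex body $S\subseteq K$, centered at the homothety center, with $|S|\ge2^{-d}|K|$. This comes from Stein's averaging: $\int|K\cap(z-K)|\,dz=|K|^2$ with integrand supported on $2K$, so some $z$ has $|K\cap(z-K)|\ge2^{-d}|K|$. Recenter at $z/2$ and set $S=K\cap(-K)$. A maximal packing of translates $a+(\delta/2)S$ with $a\in K$ then gives a net $\Lambda$ with $K\subseteq\Lambda+\delta S$ and
$\#\Lambda\le(1+\delta/2)^d|K|/\bigl((\delta/2)^d|S|\bigr)\le(4/\delta+2)^d$.
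Since $S\subseteq K$, you also have $(1-2\delta)K+\delta S\subseteq(1-\delta)K$.

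\textbf{Step~1 is not an obstacle.} You single it out as the main difficulty, but it is immediate, and Theorem~\ref{thm:fractional} is the wrong input for it. Proposition~\ref{prop:cover} holds for every compact $T$ with $|T|>0$, so apply it directly with $A=\Lambda$ and $T=(1-2\delta)K$. Since $\Lambda\subset K$, the mass is at most
$|K+(1-2\delta)K|/|(1-2\delta)K|=\left(\frac{2-2\delta}{1-2\delta}\right)^d=2^d\bigl(1+O(1/\log d)\bigr)$
for $\delta=1/(d\log d)$. No compactness or continuity-in-$\lambda$ argument for a fixed measure is needed.

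An $O(1)$ factor, such as the one from your $\lambda=1-1/d$, is not absorbed into the $O(d)$ term. It multiplies the main term $2^d\,d\log d$, so the shrinkage must be $o(1/d)$, as in your later choice $\eta\approx1/(d\log d)$.

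Finally, because the net lies in and covers all of $K$, Lemma~\ref{lem:greedy} with $F=\left(\frac{2-2\delta}{1-2\delta}\right)^d$ and $n\le(4/\delta+2)^d$, followed by enlargement by $\delta S$, covers $K$ directly. The boundary net and Boltyanski's equivalence are harmless but unnecessary.
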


\section{A covering measure from overlap energy}

A \emph{convex body} is a compact convex subset of $\R^d$ with nonempty
interior. For compact sets $A,T\subset\R^d$, let $N(A,T)$ be the least
number of translates of $T$ covering $A$, with $N(A,T)=\infty$ if no
finite cover exists. Thus $H(K)=\min_{0<\lambda<1}N(K,\lambda K)$.
Both $H(K)$ and $\If(K)$ are invariant under invertible affine maps:
apply the map to each homothet, or its linear part to each illumination
direction and normalize to unit length. Write $|A|$ for the Lebesgue volume
of a measurable set $A\subset\R^d$,
$\ind_A$ for its indicator, $\#A$ for the cardinality of a finite set,
and $B_2^d$ for the closed Euclidean unit ball.
For sets $A,T$, write $A+T=\{a+t:a\in A,t\in T\}$ and $A-T=A+(-T)$.

For a finite nonnegative Borel measure $\nu$, set
\[
 (\nu*\ind_T)(x)=\int\ind_T(x-y)\,d\nu(y).
\]
Following Artstein-Avidan and Slomka~\cite{ArtsteinSlomka},
we say that $\nu$ \emph{covers $A$ by translates of $T$} if this quantity
is at least one for every $x\in A$. In particular,
$\sum_jw_j\delta_{z_j}$ represents a finite family of weighted translates $z_j+T$. Here
$\delta_z$ denotes unit mass at $z$ and coverage is always understood pointwise.

For compact $T$ of positive volume, its \emph{covariogram}
(Matheron~\cite[Section~4.3]{Matheron}) is
\[
 g_T(z)=|T\cap(T+z)|
 =\int_{\R^d}\ind_T(x)\ind_T(x-z)\,dx.
\]
It is even and continuous, since
\[
 |g_T(z+h)-g_T(z)|
 \leq\|\ind_T(\,\cdot-h)-\ind_T\|_{L^1}\longrightarrow0
 \qquad(h\to0).
\]
For nonempty compact $A$, let $\Prob(A)$ denote the Borel probability
measures supported on $A$, and define
\begin{equation}\label{eq:energy}
 e(A,T)=\min_{\mu\in\Prob(A)}
       \iint g_T(a-b)\,d\mu(a)\,d\mu(b).
\end{equation}
The minimum exists by weak-$*$ compactness of $\Prob(A)$ and continuity of the
kernel on $A\times A$. This is the usual minimum-energy problem for the
kernel $g_T(a-b)$ in potential theory~\cite[Section~2]{Fuglede}.

\begin{proposition}\label{prop:cover}
For nonempty compact $A\subset\R^d$ and compact $T\subset\R^d$ with
$|T|>0$, there is an absolutely continuous measure $\nu$ covering $A$ by
translates of $T$, supported on $A-T$, with
\begin{equation}\label{eq:cover}
 \nu(\R^d)=\frac{|T|}{e(A,T)}\leq\frac{|A+T|}{|T|}.
\end{equation}
\end{proposition}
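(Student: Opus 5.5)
Let $\mu$ be a minimizer in \eqref{eq:energy} and write $e=e(A,T)$. The candidate covering measure is
\[
 \nu=\frac{1}{e}\,(\mu*\ind_{-T})\,dx ,
\]
that is, $d\nu(y)=e^{-1}\mu(y+T)\,dy$. This measure is absolutely continuous and is supported on $A-T$. Its total mass is $\nu(\R^d)=|T|/e$, by Fubini.

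For the coverage, I will rewrite the convolution as
\[
 (\nu*\ind_T)(x)=\frac1e\int \ind_T(x-y)\,\mu(y+T)\,dy=\frac1e\int g_T(x-b)\,d\mu(b).
\]
Here I integrate over $y\in x-T$ and exchange order, using $\int\ind_T(x-y)\ind_T(b-y)\,dy=|(x-T)\cap(b-T)|=g_T(x-b)$. So coverage of $A$ is equivalent to the potential inequality
\[
 U(x):=\int g_T(x-b)\,d\mu(b)\geq e \qquad\text{for every } x\in A.
\]

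This inequality is the standard first-order (Frostman-type) condition for an energy minimizer, and proving it is the main step. I argue by contradiction. Suppose $U(x_0)<e$ for some $x_0\in A$, and perturb $\mu_t=(1-t)\mu+t\delta_{x_0}\in\Prob(A)$. Its energy is
\[
 (1-t)^2e+2t(1-t)U(x_0)+t^2g_T(0),
\]
whose derivative at $t=0$ is $2(U(x_0)-e)<0$. Hence $\mu_t$ has smaller energy than $\mu$ for small $t>0$, contradicting minimality. This argument needs only that $g_T$ is symmetric and bounded, and no positive-definiteness. Note also that $\int U\,d\mu=e$, so $U=e$ $\mu$-a.e., though we only need the inequality. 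Pointwise coverage at every $x\in A$ then holds, not merely almost everywhere, because $x_0$ was an arbitrary point of $A$ and $g_T$ is continuous.

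For the inequality $|T|/e\le |A+T|/|T|$, I use Cauchy--Schwarz. Let $f=\mu*\ind_T$, so $f(x)=\mu(x-T)$. Then $\int f=|T|$, and $f$ is supported on $A+T$. Also $\iint g_T(a-b)\,d\mu\,d\mu=\int f^2$, since
\[
 \int\ind_T(x-a)\ind_T(x-b)\,dx=|(a+T)\cap(b+T)|=g_T(a-b).
\]
Therefore
\[
 |T|^2=\Bigl(\int_{A+T}f\Bigr)^2\le |A+T|\int f^2=|A+T|\,e,
\]
which gives $|T|/e\le|A+T|/|T|$. The remaining checks are routine: $e>0$, which follows from $e=\int f^2$ together with $\int f=|T|>0$, and the measurability issues needed to apply Fubini.
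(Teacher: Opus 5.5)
Your proposal is correct and follows the paper's proof essentially step for step: the same density $\nu=e^{-1}(\mu*\ind_{-T})$, the same first-order variation toward a Dirac mass $\delta_{x_0}$ giving $\int g_T(x-b)\,d\mu(b)\geq e$ on $A$, and the same Cauchy--Schwarz bound on $\int(\mu*\ind_T)^2$. The only difference is cosmetic: you write the energy of $(1-t)\mu+t\delta_{x_0}$ as an explicit quadratic in $t$ and argue by contradiction, whereas the paper simply states the right derivative at $t=0$.
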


\begin{proof}
Let $\mu$ attain the minimum in \eqref{eq:energy}. The convolution
$\mu*\ind_T$ vanishes outside $A+T$ and has integral $|T|$.
Tonelli's theorem and Cauchy--Schwarz therefore give
\begin{equation}\label{eq:cs}
 e(A,T)=\int_{\R^d}(\mu*\ind_T)(x)^2\,dx
 \geq\frac{|T|^2}{|A+T|}>0.
\end{equation}
We use the standard variational inequality for minimum-energy
measures~\cite[Theorem~2.4]{Fuglede}, whose short proof we include.
Since $g_T(0)=|T|<\infty$, every Dirac mass has finite energy.
For each $a\in A$, the measure $(1-t)\mu+t\delta_a$ is an admissible
competitor for $0\leq t\leq1$. The right derivative of its energy at $t=0$ is
\[
 2\left(\int_A g_T(a-b)\,d\mu(b)-e(A,T)\right).
\]
Since $\mu$ minimizes the energy, this derivative is nonnegative. Thus
\begin{equation}\label{eq:potential}
 \int_A g_T(a-b)\,d\mu(b)\geq e(A,T)\qquad(a\in A).
\end{equation}
To turn this inequality into a covering measure, define $\nu$ by
\[
 \frac{d\nu}{dy}(y)=\frac{(\mu*\ind_{-T})(y)}{e(A,T)}.
\]
The reflection $-T$ ensures that convolution with $\ind_T$ recovers the
covariogram. Indeed, Tonelli's theorem and \eqref{eq:potential} give,
for every $a\in A$,
\begin{align*}
 (\nu*\ind_T)(a)
 &=\frac1{e(A,T)}\int_A\int_{\R^d}
       \ind_{-T}(y-b)\ind_T(a-y)\,dy\,d\mu(b)\\
 &=\frac1{e(A,T)}\int_A\int_{\R^d}
       \ind_T(x-(a-b))\ind_T(x)\,dx\,d\mu(b)\\
 &=\frac1{e(A,T)}\int_A g_T(a-b)\,d\mu(b)\geq1.
\end{align*}
Here the second equality uses $x=a-y$ and
$\ind_{-T}(y-b)=\ind_T(b-y)$.
The measure is supported on $A-T$ and has mass $|T|/e(A,T)$, so \eqref{eq:cs} gives the claimed bound.
\end{proof}

\section{Fractional illumination and the equality case}

Artstein-Avidan and Slomka observed that the equivalence between covering
and illumination extends to their fractional versions
\cite{ArtsteinSlomka}.
We use the following discretization, allowing a small enlargement of
the covering copies.

\begin{lemma}\label{lem:finite}
Let $0\in\intr K$ and $0<\lambda<\rho<1$. If a finite compactly supported
nonnegative measure $\nu$ covers $K$ by translates of $\lambda K$, there is a finite
weighted cover of $K$ by translates of $\rho K$ with the same total weight.
In particular, $\If(K)\leq\nu(\R^d)$.
\end{lemma}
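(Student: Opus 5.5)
Since $0\in\intr K$ and $\rho>\lambda$, there is $\varepsilon>0$ with $\lambda K+\varepsilon B_2^d\subseteq\rho K$. Indeed, $(\rho-\lambda)K\supseteq(\rho-\lambda)rB_2^d$ for some $r>0$, so we may take $\varepsilon=(\rho-\lambda)r$. Let $S$ be the compact support of $\nu$. Partition $S$ into finitely many Borel pieces $P_1,\dots,P_m$, each of diameter less than $\varepsilon$, for instance by intersecting $S$ with a grid of small cubes. Pick a point $z_j\in P_j$ and set $w_j=\nu(P_j)$. Then $\sum_j w_j=\nu(\R^d)$, and it remains to check coverage.

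Fix $x\in K$. By hypothesis,
\[
 1\leq(\nu*\ind_{\lambda K})(x)=\sum_j\int_{P_j}\ind_{\lambda K}(x-y)\,d\nu(y).
\]
If the $j$-th integral is nonzero, there is $y\in P_j$ with $x-y\in\lambda K$. Then
\[
 x-z_j=(x-y)+(y-z_j)\in\lambda K+\varepsilon B_2^d\subseteq\rho K,
\]
so $\ind_{\rho K}(x-z_j)=1$. The $j$-th integral is always at most $w_j$, so
\[
 \sum_{j:\,x\in z_j+\rho K}w_j\;\geq\;\sum_j\int_{P_j}\ind_{\lambda K}(x-y)\,d\nu(y)\;\geq\;1.
\]
Hence the weights $w_j$ at the points $z_j$ give a finite weighted cover of $K$ by translates of $\rho K$ with total weight $\nu(\R^d)$.

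For the consequence $\If(K)\leq\nu(\R^d)$, convert homothets into illumination directions as in Boltyanski's argument. For each $j$, let $u_j$ be $-z_j/|z_j|$ when $z_j\neq0$. Suppose $x\in\partial K$ lies in $z_j+\rho K$. Then $x-z_j\in\rho K$, so
\[
 x+t(-z_j)=(1-t)x+t(x-z_j)\in(1-t)K+t\rho K.
\]
For small $t>0$ this point should lie in $\intr K$. The reason is that $x=z_j+\rho k$ with $k\in K$, and $x-tz_j=(1-t)x+t\rho k$ is a convex combination of $x$ and $\rho k$, where $\rho k\in\intr K$ because $\rho<1$ and $0\in\intr K$. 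A segment from a point of $K$ to an interior point lies in $\intr K$ except possibly at its first endpoint, so $x-tz_j\in\intr K$ for $t\in(0,1]$ and $u_j$ illuminates $x$. If $z_j=0$, then $x\in\rho K\subseteq\intr K$, which is impossible for a boundary point, so such $j$ never contribute at $\partial K$ and can be dropped. The directions $u_j$ with weights $w_j$ therefore satisfy the defining inequality of $\If(K)$, giving $\If(K)\leq\sum_j w_j=\nu(\R^d)$.

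The only delicate step is the enlargement inclusion $\lambda K+\varepsilon B_2^d\subseteq\rho K$, and it is exactly why the slack $\rho>\lambda$ is needed. Pointwise coverage of $K$ by the discretized measure fails in general without it: points on the boundary of translates of $\lambda K$ would lose mass. The rest of the argument is routine measure bookkeeping.
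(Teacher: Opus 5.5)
Your proof is correct and follows the paper's argument step for step: the enlargement $\lambda K+(\rho-\lambda)rB_2^d\subseteq\rho K$, a partition of the support into pieces of diameter less than $(\rho-\lambda)r$ carrying the weights $w_j=\nu(P_j)$, and the conversion of each nonzero center $z_j$ into the direction $-z_j/\|z_j\|_2$, with zero centers dropped. Your segment argument for illumination proves more than is needed, since $x-z_j\in\rho K\subseteq\intr K$ already gives $x+tu_j\in\intr K$ at $t=\|z_j\|_2$.
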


\begin{proof}
Choose $r>0$ with $rB_2^d\subset K$. Convexity gives
$\lambda K+(\rho-\lambda)rB_2^d\subseteq\rho K\subset\intr K$.
Partition the compact support of $\nu$ into finitely many nonempty Borel
sets $C_j$ of diameter less than $(\rho-\lambda)r$. Choose $z_j\in C_j$ and put
$w_j=\nu(C_j)$. If $y\in C_j$ and $x-y\in\lambda K$, then
$x-z_j\in\rho K$. Thus, for every $x\in K$,
\[
 \sum_jw_j\ind_{\rho K}(x-z_j)
 \geq\int\ind_{\lambda K}(x-y)\,d\nu(y)\geq1.
\]
If $x\in\partial K\cap(z_j+\rho K)$, then $x-z_j\in\intr K$, so
$z_j\ne0$ and $-z_j/\|z_j\|_2$ illuminates $x$.
Omit zero centers, whose translates miss $\partial K$, and give each remaining
direction weight $w_j$.
\end{proof}

Translate $K$ so that $0\in\intr K$. For $0<\lambda<1$, apply
Proposition~\ref{prop:cover} with $A=K$ and $T=\lambda K$.
Since $K+\lambda K=(1+\lambda)K$, homogeneity of volume gives
a covering measure $\nu$ with
\[
 \nu(\R^d)\leq\frac{|K+\lambda K|}{|\lambda K|}
 =\left(\frac{1+\lambda}{\lambda}\right)^d.
\]
Lemma~\ref{lem:finite} gives $\If(K)\leq\nu(\R^d)$.
Letting $\lambda\uparrow1$, we obtain
\begin{equation}\label{eq:fractional-bound}
 \If(K)\leq2^d.
\end{equation}

It remains to determine when equality holds.
A point $v\in K$ is \emph{exposed} if some nonzero linear functional has
$v$ as its unique maximizer on $K$. The \emph{support function} of $K$ is
$h_K(u)=\max_{x\in K}\langle u,x\rangle$. Write $\conv A$ for the set of
finite convex combinations of points of $A$.

A finite set $V$ is
\emph{antipodal} if each pair of distinct points lies on two distinct
parallel supporting hyperplanes of $\conv V$. A hyperplane \emph{supports}
a convex set if it meets the set and the set lies in one of the closed
half-spaces it defines. We use the
Danzer--Gr\"unbaum theorem: an antipodal set in $\R^d$ has at most $2^d$
points, and equality holds exactly for the vertex sets of
parallelotopes~\cite{DanzerGrunbaum}.

\begin{proposition}\label{prop:rigidity}
If $e(K,K)=2^{-d}|K|$, then $K$ is a parallelotope.
\end{proposition}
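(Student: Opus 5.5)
The plan is to read off, from equality, the equality case of the Cauchy--Schwarz step \eqref{eq:cs}, and then to use the variational inequality \eqref{eq:potential} to pin down the minimizer $\mu$. The target is to show that $\mu$ is uniform on an antipodal set of $2^d$ points and then apply Danzer--Gr\"unbaum. We use $A=T=K$, translate so that $0\in\intr K$, and use $K+K=2K$. Then \eqref{eq:cs} reads $e(K,K)\geq|K|^2/|2K|=2^{-d}|K|$, so the hypothesis is exactly equality in Cauchy--Schwarz. Equality forces $\mu*\ind_K$ to be constant a.e.\ on $2K$. Since this function has integral $|K|$, we get
\[
 \mu(x-K)=(\mu*\ind_K)(x)=2^{-d}\qquad\text{for a.e. }x\in 2K .
\]

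\emph{Step 1: atoms at exposed points.} Let $v$ be an exposed point of $K$, so $2v$ is an exposed point of $2K$. Pick $x_n\to 2v$ in $\intr(2K)$ at which the identity above holds. The closed sets $(x_n-K)\cap K$ shrink to $\{v\}$, because any point $y$ of $K$ with $x-y\in K$ and $x$ near $2v$ must be near $v$ by exposedness. Upper semicontinuity of $\mu$ on closed sets then gives $\mu(\{v\})\geq 2^{-d}$. Since $\mu$ is a probability measure, $K$ has at most $2^d$ exposed points. By Straszewicz's theorem the extreme points lie in the closure of the exposed points, so they are finite, and $K=\conv V$ is a polytope with vertex set $V$, $n=\#V\leq 2^d$, and $m_v:=\mu(\{v\})\geq2^{-d}$.

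\emph{Step 2: antipodality.} Integrating \eqref{eq:potential} against $\mu$ gives equality, so equality in \eqref{eq:potential} holds $\mu$-a.e. In particular it holds at every atom, hence at each vertex $v$:
\[
 2^{-d}|K|=e(K,K)=\int g_K(v-b)\,d\mu(b)\geq m_v g_K(0)+\int_{b\neq v}g_K(v-b)\,d\mu(b).
\]
Since $m_vg_K(0)=m_v|K|\geq2^{-d}|K|$, this forces $m_v=2^{-d}$ and $g_K(v-b)=0$ for $\mu$-a.e.\ $b\neq v$. In particular $g_K(v-w)=0$ for all distinct $v,w\in V$. That means $|K\cap(K+v-w)|=0$, so $v-w\in\partial(K-K)$. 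A supporting hyperplane of $K-K$ at $v-w$ yields a functional $u$ with $\langle u,v\rangle=h_K(u)$ and $\langle u,w\rangle=-h_K(-u)$. These are two distinct parallel supporting hyperplanes of $\conv V=K$ through $v$ and $w$, so $V$ is antipodal.

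\emph{Step 3: the main obstacle, showing $n=2^d$.} Once $n=2^d$ is established, Danzer--Gr\"unbaum gives that $V$ is the vertex set of a parallelotope, and so is $K=\conv V$. Step 2 already shows the translates $v+K$, $v\in V$, have pairwise disjoint interiors inside $2K$, so $n\leq2^d$ holds twice over. The danger is a residual part $\mu'=\mu-2^{-d}\sum_{v}\delta_v$ of mass $s=1-n2^{-d}>0$ spread over non-vertex points. What we know is that $\mu'$-a.e.\ point $b$ satisfies $g_K(b-v)=0$ for every vertex $v$, i.e.\ $b-v\in\partial(K-K)$. We also know that $\mu'*\ind_K=2^{-d}$ a.e.\ on $2K\setminus\bigcup_v(v+K)$.

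My first attempt would be to apply the Step~1 argument to $\mu'$ relative to the region $2K\setminus\bigcup_v(v+K)$. I would pick a point $b\in\operatorname{supp}\mu'$ that is exposed in $\conv\operatorname{supp}\mu$ (for a generic direction). Then I would show as in Step~1 that it carries an atom of mass $\geq2^{-d}$. Finally I would rerun Step~2 at that atom to get that $V\cup\{b\}$ is antipodal, hence $b$ is a vertex, a contradiction. If the localization near $2b$ fails because $2b$ is not a boundary point of $2K$, the fallback is to localize near $b+v$ for a vertex $v$ antipodal to $b$. The hope is that the constancy of $\mu*\ind_K$ across the boundary of $v+K$ forces the same conclusion.
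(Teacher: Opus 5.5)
Steps~1 and~2 are sound. Step~2 differs from the paper and is a nice variant: using equality in \eqref{eq:potential} at the atoms, you get $m_v=2^{-d}$ and $\int g_K(b-v)\,d\mu'(b)=0$ for every $v\in V$. This is exactly the paper's identity \eqref{eq:residual}, since $|(K+b)\cap(K+v)|=g_K(b-v)$; the paper reaches it from the flatness $f=2^{-d}\ind_{2K}$ instead. You should still justify that $g_K(z)=0$ forces $z\notin\intr(K-K)$. The paper does this by showing $\intr(K-K)\subseteq\intr K-\intr K$.

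There is, however, a genuine gap at Step~3: you never prove $\mu'=0$. Without $\#V=2^d$, Danzer--Gr\"unbaum gives nothing, since the vertex set of a simplex is antipodal but is not the vertex set of a parallelotope.

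Your first attempt cannot work as stated. Since $V\subseteq\operatorname{supp}\mu\subseteq K=\conv V$, you have $\conv\operatorname{supp}\mu=K$. Its exposed points are exactly the vertices, so no point of $\operatorname{supp}\mu'\setminus V$ is exposed there. The fallback is only a hope.

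The missing idea is elementary, and it finishes the proof with what you already have: for every $b\in K$ there is a vertex $v$ with $g_K(b-v)>0$. To see this, write $b$ as a convex combination of vertices and pick $v$ with coefficient $t>0$. Then $b=tv+(1-t)c$ with $c\in K$, so $b-v=(1-t)(c-v)\in(1-t)(K-K)$. This set lies in $\intr(K-K)$ because $0\in\intr(K-K)$ and $0<t\leq1$. The paper makes this explicit with the points $(1-t)c+tp$ and $(1-t)v+tp$ for $p\in\intr K$. Hence $\sum_{v\in V}g_K(b-v)>0$ for every $b\in K$. Your Step~2 gives $\int\sum_{v\in V}g_K(b-v)\,d\mu'(b)=0$, so $\mu'=0$, $n2^{-d}=1$, and $n=2^d$.
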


\begin{proof}
Let $\mu\in\Prob(K)$ attain the minimum.
Equality in Cauchy--Schwarz in \eqref{eq:cs} gives
\begin{equation}\label{eq:flat}
 f:=\mu*\ind_K=2^{-d}\ind_{2K}\qquad\text{almost everywhere}.
\end{equation}
We first show that every exposed point carries mass $2^{-d}$.
If $v$ is exposed by a unit vector $u$, then every cap
$\{x\in2K:\langle u,x\rangle>2h_K(u)-\varepsilon\}$, $\varepsilon>0$, has
positive volume. On this cap,
\[
 f(x)\leq\mu\{a\in K:\langle u,a\rangle>h_K(u)-\varepsilon\},
\]
because $x-a\in K$ implies $\langle u,a\rangle>h_K(u)-\varepsilon$.
Since the cap has positive volume, \eqref{eq:flat} holds at some point of it,
so the set on the right has $\mu$-mass at least $2^{-d}$.
As $\varepsilon\downarrow0$, these sets decrease to $\{v\}$,
so continuity of measure from above gives $\mu\{v\}\geq2^{-d}$. Conversely,
$f\geq\mu\{v\}\ind_{K+v}$, and $K+v\subseteq2K$ has positive volume,
so \eqref{eq:flat} gives $\mu\{v\}\leq2^{-d}$.

The set $V$ of exposed points satisfies $\#V\leq2^d$, since $\mu(K)=1$.
By Straszewicz's theorem~\cite{Straszewicz}, a convex body is the closed
convex hull of its exposed points. Since $V$ is finite, $K=\conv V$,
so $K$ is a polytope with vertex set $V$.

We next show that $\mu$ has no mass away from these vertices. Write
\[
 \mu=2^{-d}\sum_{v\in V}\delta_v+\sigma,\qquad\sigma\geq0.
\]
The interiors of $K+v$ and $K+w$ are disjoint for distinct $v,w\in V$,
since an intersection would have positive volume and give $f\geq2^{1-d}$
there, contradicting \eqref{eq:flat}.
On $K+v$ the atom at $v$ already contributes $2^{-d}$, so all remaining
contributions vanish almost everywhere. Tonelli's theorem yields
\begin{equation}\label{eq:residual}
 \int_K |(K+a)\cap(K+v)|\,d\sigma(a)=0\qquad(v\in V).
\end{equation}
For every $a\in K$, at least one of these overlaps has positive volume.
Indeed, a convex combination of the vertices writes
$a=tv+(1-t)b$ with $v\in V$, $b\in K$, and $0<t\leq1$.
For any $p\in\intr K$, both
$x=(1-t)b+tp$ and $y=(1-t)v+tp$ lie in $\intr K$, and $x-y=a-v$.
Hence $a+y=v+x$ lies in the interiors of both $K+a$ and $K+v$.
Summing \eqref{eq:residual} over $v\in V$ integrates a strictly positive
function against $\sigma$ to zero, so $\sigma=0$.
Since $\mu$ is a probability measure, $2^{-d}\#V=1$, so $\#V=2^d$.

Finally, we show that $V$ is antipodal. Every $z\in\intr(K-K)$ can be
written as $z=s(x-y)$ with $x,y\in K$ and $0\leq s<1$, because
$0\in\intr(K-K)$. For $p\in\intr K$,
\[
 z=\bigl(sx+(1-s)p\bigr)-\bigl(sy+(1-s)p\bigr)
 \in\intr K-\intr K.
\]
Consequently, if $v-w$ were in $\intr(K-K)$, the interiors of $K+v$ and
$K+w$ would intersect. Their disjointness therefore implies
$v-w\in\partial(K-K)$. A supporting functional $u\ne0$ at $v-w$ satisfies
\[
 \langle u,v-w\rangle=h_{K-K}(u)=h_K(u)+h_K(-u).
\]
Since $v,w\in K$, equality forces
$\langle u,v\rangle=h_K(u)$ and $\langle u,w\rangle=-h_K(-u)$.
Thus $v$ and $w$ lie on opposite supporting hyperplanes of $K$.
The set $V$ is antipodal and has $2^d$ points, so the
Danzer--Gr\"unbaum theorem makes $K=\conv V$ a parallelotope.
\end{proof}

\begin{proof}[Proof of Theorem~\ref{thm:fractional}]
For the equality case we use the sharper energy estimate from
Proposition~\ref{prop:cover}. Together with Lemma~\ref{lem:finite}, it gives
\[
 \If(K)\leq\frac{|\lambda K|}{e(K,\lambda K)}\qquad(0<\lambda<1).
\]
Comparing the energies of all $\mu\in\Prob(K)$, and using
$\lambda K\subset K$, we have
\begin{align*}
 |e(K,\lambda K)-e(K,K)|
 &\leq\|g_{\lambda K}-g_K\|_\infty\\
 &\leq2\|\ind_{\lambda K}-\ind_K\|_{L^1}
 =2(1-\lambda^d)|K|\longrightarrow0.
\end{align*}
Letting $\lambda\uparrow1$ therefore gives $\If(K)\leq|K|/e(K,K)$.
If $K$ is not a parallelotope, Proposition~\ref{prop:rigidity} and
\eqref{eq:cs} imply $e(K,K)>2^{-d}|K|$, and hence $\If(K)<2^d$.

By affine invariance, it remains to consider the cube
$[-1,1]^d$. A direction illuminating a vertex
$\varepsilon\in\{-1,1\}^d$ must satisfy $\varepsilon_i u_i<0$ for every
$i$, so it illuminates at most one vertex. Summing the vertex constraints
gives total weight at least $2^d$. Conversely, for each boundary point $x$,
choose $\varepsilon_i=x_i$ whenever $|x_i|=1$, with the remaining signs
arbitrary. Then $-\varepsilon/\sqrt d$ illuminates $x$, so the $2^d$ such
directions, each with weight one, suffice.
Thus $\If([-1,1]^d)=2^d$.
\end{proof}

\section{From covering measures to finite covers}

We use the classical greedy covering argument (see Lov\'asz~\cite{Lovasz}
and Stein~\cite{SteinCover}). For its application to finite nets in geometric
covering, see Nasz\'odi~\cite[Theorem~1.2]{NaszodiCovering}.

\begin{lemma}\label{lem:greedy}
Let $A$ consist of $n\geq1$ points and let $T$ be compact. If $A$ admits a covering measure by translates of $T$ of mass at most $F>1$, then
\[
 N(A,T)\leq1+\lceil F\log n\rceil.
\]
\end{lemma}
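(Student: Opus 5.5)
The plan is the classical greedy set-cover argument, with the covering measure playing the role of a fractional cover. Let $\nu$ be a covering measure of $A$ by translates of $T$ with $\nu(\R^d)\leq F$. Because $A$ is finite, only the finitely many sets
\[
 Y_S=\{y: x-y\in T \text{ exactly for } x\in S\},\qquad S\subseteq A,
\]
matter. Collapsing $\nu$ onto one representative point of each nonempty $Y_S$ gives a finite weighted family of translates $y_S+T$. These translates cover each $x\in A$ with total weight at least one, and their total weight is at most $F$. So we may assume $\nu=\sum_jw_j\delta_{z_j}$.

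The greedy step is an averaging argument. Let $U\subseteq A$ be the set of still uncovered points, with $\#U=u$. Summing the coverage inequality over $x\in U$ gives
\[
 \sum_jw_j\,\#\bigl(U\cap(z_j+T)\bigr)\geq u.
\]
Since $\sum_j w_j\leq F$, some translate $z_j+T$ contains at least $u/F$ points of $U$. We choose that translate. After $k$ greedy steps at most $n(1-1/F)^k\leq ne^{-k/F}$ points remain uncovered.

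To finish, set $k=\lceil F\log n\rceil$. Then $ne^{-k/F}\leq1$, so after $k$ steps at most one point remains uncovered. That point lies in some translate with positive weight, because its coverage is at least one, so a single extra translate handles it. This gives $N(A,T)\leq1+\lceil F\log n\rceil$. An alternative is to stop once the uncovered count is at most one, which can only happen earlier.

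The main obstacle is the finite-support reduction: we must make sure that collapsing each $Y_S$ to one point preserves coverage exactly at every point of $A$. This holds because all points of $Y_S$ cover precisely the same subset $S$. Measurability of the $Y_S$ is not an issue: each $Y_S$ is a finite Boolean combination of the compact sets $x-T$, hence Borel. The hypothesis $F>1$ only ensures that $1-1/F\in(0,1)$ and that $\log$ behaves as needed. The case $n=1$ is trivial.
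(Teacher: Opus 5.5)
Your proof is correct and follows the paper's argument essentially step for step: the same averaging over the uncovered set to find a translate containing at least $\#R/F$ of its points, the same $(1-1/F)^k\leq e^{-k/F}$ decay, and the same single extra translate at the end. The only difference is your preliminary collapse of $\nu$ onto representatives of the atoms $Y_S$. It is valid but unnecessary, since the paper simply integrates the measurable function $z\mapsto\#(R\cap(z+T))$ against $\nu$ directly.
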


\begin{proof}
Let $\nu$ be such a covering measure, and let $R\subseteq A$ be the set of
points not yet covered. For every $x\in R$, the covering property gives
\[
    \int \ind_T(x-z)\,d\nu(z)\geq1.
\]
Summing over $x\in R$ and interchanging the finite sum with the integral,
\[
    \int \#(R\cap(z+T))\,d\nu(z)\geq\#R.
\]
Since $\nu(\R^d)\leq F$, some translate $z+T$ therefore contains at least
$\#R/F$ points of $R$.

Starting with $R_0=A$, choose at each step such a translate and let
$R_{j+1}$ be the points of $R_j$ left uncovered. Then
\[
 \#R_{j+1}\leq\left(1-\frac1F\right)\#R_j,
\]
and hence
\[
 \#R_k\leq n\left(1-\frac1F\right)^k\leq ne^{-k/F}.
\]
For $k=\lceil F\log n\rceil$, this is at most one, so at most one further
translate is needed.
\end{proof}

\begin{proposition}\label{prop:hadwiger}
For every convex body $K\subset\R^d$ and $0<\delta<1/2$,
\begin{equation}\label{eq:explicit}
 N\bigl(K,(1-\delta)K\bigr)
 \leq1+\left\lceil
 d\left(\frac{2-2\delta}{1-2\delta}\right)^d
 \log\left(\frac4\delta+2\right)\right\rceil.
\end{equation}
\end{proposition}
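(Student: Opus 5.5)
The plan is to discretize $K$ by a finite net $A\subset K$. I would then apply Proposition~\ref{prop:cover} to $A$ with the smaller body $T=(1-2\delta)K$, turn the resulting covering measure into a finite cover of $A$ by Lemma~\ref{lem:greedy}, and finally enlarge each translate of $T$ by the net's mesh so that it becomes a translate of $(1-\delta)K$. By affine invariance I may assume $0\in\intr K$. The two factors in \eqref{eq:explicit} should come out as follows: $\bigl(\frac{2-2\delta}{1-2\delta}\bigr)^d$ is the mass bound, and $d\log(4/\delta+2)$ is $\log n$ for the net.

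\textbf{Step 1 (mass).} For any finite $A\subseteq K$, Proposition~\ref{prop:cover} with $T=(1-2\delta)K$ gives a measure covering $A$ by translates of $T$ of mass at most
\[
 \frac{|A+T|}{|T|}\leq\frac{|K+(1-2\delta)K|}{|(1-2\delta)K|}=\left(\frac{2-2\delta}{1-2\delta}\right)^d=:F .
\]
Since $F>1$, Lemma~\ref{lem:greedy} yields $N(A,T)\leq1+\lceil F\log n\rceil$ with $n=\#A$. This step is routine.

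\textbf{Step 2 (transfer back to $K$).} Suppose the net satisfies $K\subseteq A+\delta K$, i.e.\ every $x\in K$ has some $a\in A$ with $x-a\in\delta K$. If $a\in z+(1-2\delta)K$, then
\[
 x\in z+(1-2\delta)K+\delta K=z+(1-\delta)K .
\]
So any cover of $A$ by translates of $T$ becomes a cover of $K$ by translates of $(1-\delta)K$ with the same number of pieces. This gives \eqref{eq:explicit}, provided $\log n\leq d\log(4/\delta+2)$.

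\textbf{Step 3 (the net; main obstacle).} I need $A\subseteq K$ with $K\subseteq A+\delta K$ and $\#A\leq(4/\delta+2)^d$. The difficulty is asymmetry. A maximal packing of translates $a+\tfrac\delta4K$ only guarantees $x-a\in\tfrac\delta4(K-K)$, not $x-a\in\delta K$. I would first try a homothetic packing that stays inside $K$. Take a maximal $A\subseteq K$ such that the sets
\[
 S_a=a+\tfrac{\delta}{4}(K-a)
\]
have pairwise disjoint interiors. Each $S_a$ lies in $K$ by convexity and has volume $(\delta/4)^d|K|$, which gives $\#A\leq(4/\delta)^d$.

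The covering condition then has to be recovered by exploiting that both $x$ and $a$ lie in $K$. If $S_x$ meets $S_a$, then
\[
 (1-\tfrac\delta4)(x-a)\in\tfrac\delta4(K-K),
\]
and I would try to show that $x$ lies in a translate of $a+\delta K$ whose center can be taken as a controlled point of $K$. If this fails, the fallback is to enlarge the packing region to $K+\tfrac\delta4K$-type sets. Counting then gives a ratio of the form $\bigl((1+\tfrac\delta2)/\tfrac\delta4\bigr)^d=(4/\delta+2)^d$, which matches the constant in \eqref{eq:explicit}. I would also allow the net points to sit slightly outside $K$, say in $(1+\tfrac\delta2)K$, and absorb this into Step~1 by using $|A+T|\leq|(1+\tfrac\delta2)K+(1-2\delta)K|$ only if it does not spoil $F$.

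Getting the net to be both one-sided (mesh measured by $\delta K$ rather than by $\delta(K-K)$) and of size $(4/\delta+2)^d$ is where I expect the real work to lie.
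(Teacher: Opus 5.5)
Your Steps~1 and~2 match the paper's mass bound and transfer step. Step~3, which you rightly flag as the crux, is never carried out, and the route you try first cannot work. The sets $a+\tfrac\delta4(K-a)=(1-\tfrac\delta4)a+\tfrac\delta4K$ are just translates of $\tfrac\delta4K$, so maximality only gives $x-a\in\tfrac{\delta}{4-\delta}(K-K)$. Step~2, however, needs $K\subseteq A+\delta(K-p)$ for a single fixed $p$, and to get this from maximality you would need $K-K\subseteq(4-\delta)(K-p)$. For a simplex the least such factor is $d+1$: test the support functions on the facet normals weighted by facet areas, which sum to zero. So the argument breaks for $d\geq3$. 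Shrinking the packing scale to compensate raises the bound on $\#A$ to about $((d+1)/\delta)^d$, which does not give the $\log(4/\delta+2)$ in \eqref{eq:explicit}. Your fallback reproduces $(4/\delta+2)^d$ numerically, but it names no packing that yields a one-sided mesh.

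The missing idea is to pack with a \emph{centrally symmetric} body inside $K$ and pay for the symmetrization in volume. Stein's averaging gives $\int_{\R^d}|K\cap(z-K)|\,dz=|K|^2$, with the integrand supported on $2K$. Hence for some $z$, after recentering at $z/2$, the body $S=K\cap(-K)\subseteq K$ satisfies $S=-S$ and $|S|\geq2^{-d}|K|$. Take a maximal family of translates $a+\tfrac\delta2S$, $a\in K$, with pairwise disjoint interiors. Since $\tfrac\delta2(S-S)=\delta S$, maximality yields $K\subseteq A+\delta S\subseteq A+\delta K$, which is exactly the one-sided mesh your Step~2 requires. The packed sets lie in $K+\tfrac\delta2S\subseteq(1+\tfrac\delta2)K$, so
$\#A\leq(1+\tfrac\delta2)^d|K|/\bigl((\tfrac\delta2)^d|S|\bigr)\leq(4/\delta+2)^d$.
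The factor $2^d$ lost in $|S|$ is precisely what turns $(2/\delta+1)^d$ into $(4/\delta+2)^d$. With this net, your Steps~1 and~2 complete the proof as in the paper.
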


\begin{proof}
We first find a large centrally symmetric subset of $K$, using Stein's
averaging argument~\cite[Theorem~4]{SteinSymmetry}. Fubini's theorem gives
\[
 \int_{\R^d}|K\cap(z-K)|\,dz=|K|^2.
\]
The integrand is supported on $2K$, so for some $z$ the intersection has
volume at least $2^{-d}|K|$. Recenter at $z/2$. Then
$S=K\cap(-K)$ is convex and centrally symmetric, with $|S|\geq2^{-d}|K|>0$.
Since $S$ has positive volume and is centrally symmetric,
$0\in\intr S\subseteq\intr K$.

Choose a maximal family $\{a+(\delta/2)S:a\in\Lambda\}$ with set of centers
$\Lambda\subset K$ and pairwise disjoint interiors. The packing estimate
below bounds its size, so successive additions terminate. Since $S=-S$,
maximality implies $K\subseteq\Lambda+\delta S$: any point outside this
set could be added as a new center. Thus $\Lambda$ is a finite net for $K$.
All packed translates lie in
$(1+\delta/2)K$, so
\begin{equation}\label{eq:net}
 \#\Lambda\leq
 \frac{(1+\delta/2)^d|K|}{(\delta/2)^d|S|}
 \leq\left(\frac4\delta+2\right)^d.
\end{equation}
Proposition~\ref{prop:cover} gives a covering measure for $\Lambda$ by
translates of $(1-2\delta)K$ with mass at most
\[
 \frac{|\Lambda+(1-2\delta)K|}{|(1-2\delta)K|}
 \leq\left(\frac{2-2\delta}{1-2\delta}\right)^d.
\]
Lemma~\ref{lem:greedy} and \eqref{eq:net} bound the number of translates
needed to cover $\Lambda$ by the right side of \eqref{eq:explicit}.
Since $K\subseteq\Lambda+\delta S$, adding $\delta S$ to each of these
translates covers $K$. Finally,
\[
 (1-2\delta)K+\delta S\subseteq(1-\delta)K,
\]
so this is a cover by translates of $(1-\delta)K$, as required.
\end{proof}

\begin{proof}[Proof of Theorem~\ref{thm:hadwiger}]
By definition, $H(K)\leq N(K,(1-\delta)K)$ for every $0<\delta<1/2$.
Thus \eqref{eq:explicit} bounds $H(K)$ uniformly over all convex bodies
$K\subset\R^d$.

For $d\geq3$, choose
\[
 \delta=\frac1{d\log d}.
\]
As $d\to\infty$, the two factors in \eqref{eq:explicit} satisfy
\begin{align*}
 \left(\frac{2-2\delta}{1-2\delta}\right)^d
 &=2^d\exp\bigl(d\delta+O(d\delta^2)\bigr)
  =2^d\left(1+O\left(\frac1{\log d}\right)\right),\\
 \log\left(\frac4\delta+2\right)
 &=\log(4d\log d+2)
  =\log d+\log\log d+O(1).
\end{align*}
Substitution in \eqref{eq:explicit} gives
\[
 H(d)\leq2^d\bigl(d\log d+d\log\log d+O(d)\bigr).
\]
The ceiling and the extra translate contribute at most two. Combining
this estimate with the standard cube lower bound $H(d)\geq2^d$ yields
$H(d)=2^{d+O(\log d)}$.
\end{proof}

\appendix
\section{AI use and discovery of the proof}\label{app:ai}

In the interest of transparency, as well as general curiosity towards the emerging AI tools applied to research mathematics, we are including a link to the full
\href{https://chatgpt.com/share/6aa156f6-e0f4-83eb-9532-bd258660e44b}{conversation with OpenAI's GPT-6 Pro} in which the ideas behind this paper were discovered.

There is an aspect of this exchange that is worth highlighting: the model was never directed to look up or prove the fractional illumination conjecture. The result seemingly emerged as an intermediate step towards a different goal set out for the LLM in the prompts. A later request to attack the full conjecture led to the greedy covering argument on a finite net used for the
asymptotic bound. Moreover, no proposed proof route or technical background for this problem was supplied as context before or during this chat.

Codex was subsequently used to revise the exposition and check sources.
The shared transcript demonstrates the exploratory process and includes
further claims beyond those developed here. The author takes full responsibility
for the mathematical claims and references in this manuscript.

\end{document}